
\documentclass[11pt, draft]{amsart}
\usepackage[margin=3.7cm]{geometry}
\usepackage{amssymb, amstext, amscd, amsmath, amsthm}
\usepackage{mathtools, color, paralist, dsfont, rotating}
\usepackage{verbatim}
\usepackage{enumerate}
\usepackage{soul}

\usepackage{tikz-cd}
\usepackage{tikz}
\usepackage[all]{xy}
\numberwithin{equation}{section}

\makeatletter
\def\bbordermatrix#1{\begingroup \m@th
  \@tempdima 4.75\p@
  \setbox\z@\vbox{%
    \def\cr{\crcr\noalign{\kern2\p@\global\let\cr\endline}}%
    \ialign{$##$\hfil\kern2\p@\kern\@tempdima&\thinspace\hfil$##$\hfil
      &&\quad\hfil$##$\hfil\crcr
      \omit\strut\hfil\crcr\noalign{\kern-\baselineskip}%
      #1\crcr\omit\strut\cr}}%
  \setbox\tw@\vbox{\unvcopy\z@\global\setbox\@ne\lastbox}%
  \setbox\tw@\hbox{\unhbox\@ne\unskip\global\setbox\@ne\lastbox}%
  \setbox\tw@\hbox{$\kern\wd\@ne\kern-\@tempdima\left[\kern-\wd\@ne
    \global\setbox\@ne\vbox{\box\@ne\kern2\p@}%
    \vcenter{\kern-\ht\@ne\unvbox\z@\kern-\baselineskip}\,\right]$}%
  \null\;\vbox{\kern\ht\@ne\box\tw@}\endgroup}
\makeatother


%
\makeatletter
\def\@cite#1#2{{\m@th\upshape\bfseries%
[{#1\if@tempswa{\m@th\upshape\mdseries, #2}\fi}]}}
\makeatother
%
\theoremstyle{plain}
\newtheorem{theorem}{Theorem}[section]
\newtheorem{corollary}[theorem]{Corollary}

\newtheorem{lemma}[theorem]{Lemma}
\theoremstyle{definition}
\newtheorem{definition}[theorem]{Definition}
\newtheorem{example}[theorem]{Example}

\newtheorem{remark}[theorem]{Remark}

\theoremstyle{remark}


%

\mathtoolsset{centercolon}


\renewcommand{\H}{{\mathcal{H}}}

\renewcommand{\L}{{\mathcal{L}}}
\newcommand{\M}{{\mathcal{M}}}

\newcommand{\T}{{\mathcal{T}}}



\newcommand{\bN}{\mathbb{N}}

\newcommand{\bZ}{\mathbb{Z}}
\newcommand{\bR}{\mathbb{R}}

\renewcommand{\bZ}{\mathbb{Z}}



\newcommand{\AND}{\text{ and }}

\newcommand{\FORAL}{\text{ for all }}



\newcommand{\Alg}{\operatorname{Alg}}

\newcommand{\id}{{\operatorname{id}}}

\newcommand{\Ind}{{\operatorname{Ind}}}

\newcommand{\Gr}{\mathrm{Gr}}

\newcommand{\supp}{\operatorname{supp}}
\newcommand{\Arv}{\operatorname{Arv}}
\newcommand{\LL}{\operatorname{LL}}

\newcommand{\p}[2]{P_{#1}^{(#2)}}
\newcommand{\q}[2]{Q_{#1}^{(#2)}}
\newcommand{\s}[2]{S_{#1}^{(#2)}}
\renewcommand{\r}[2]{r_{#1}^{(#2)}}
\renewcommand{\a}[2]{\alpha_{#1}^{(#2)}}

\begin{document}

\title[Doob equivalence and NC peaking for Markov chains]{Doob equivalence and non-commutative peaking for Markov chains}

\author[X. Chen]{Xinxin Chen}
\address{Department of Statistics\\ University of Chicago\\ Chicago\\ IL \\ USA}
\email{xinxinc@galton.uchicago.edu}

\author[A. Dor-On]{Adam Dor-On}
\address{Department of Mathematical Sciences\\ University of Copenhagen\\ Copenhagen\\ Denmark}
\email{adoron@math.ku.dk}

\author[L. Hui]{Langwen Hui}
\address{Department of Mathematics\\ University of Illinois at Urbana-Champaign\\ Urbana\\ IL \\ USA}
\email{langwen3@illinois.edu}

\author[C. Linden]{Christopher Linden}
\address{Department of Mathematics\\ University of Illinois at Urbana-Champaign\\ Urbana\\ IL \\ USA}
\email{clinden2@illinois.edu}

\author[Y. Zhang]{Yifan Zhang}
\address{Department of Mathematics\\ University of Illinois at Urbana-Champaign\\ Urbana\\ IL \\ USA}
\email{yifan8@illinois.edu}

\subjclass[2010]{Primary: 60J10, 47L80. Secondary: 60J45, 60J50, 47L75}
\keywords{Stochastic matrices, harmonic functions, tensor algebras, rigidity, non-commutative peaking, Doob equivalence, Liouville property}

\thanks{This work was done as part of the research project ``Problems on Markov chains arising from operator algebras" at the Illinois Geometry Lab in Spring 2019. The first, third and fifth authors participated as undergraduate scholars, the fourth author served as graduate student team leader, and the second author together with Florin Boca served as faculty mentors. The project was supported by an NSF grant DMS-1449269. The second author was supported by an NSF grant DMS-1900916 and by the European Union's Horizon 2020 Marie Sklodowska-Curie grant No 839412.}

\maketitle

\begin{abstract}
In this paper we show how questions about operator algebras constructed from stochastic matrices motivate new results in the study of harmonic functions on Markov chains. More precisely, we characterize coincidence of conditional probabilities in terms of (generalized) Doob transforms, which then leads to a stronger classification result for the associated operator algebras in terms of spectral radius and strong Liouville property. Furthermore, we characterize the non-commutative peak points of the associated operator algebra in a way that allows one to determine them from inspecting the matrix. This leads to a concrete analogue of the maximum modulus principle for computing the norm of operators in the ampliated operator algebras.
\end{abstract}

\section{Introduction}

The theory of Markov chains has applications in diverse areas of research such as group theory, dynamical systems, electrical networks and information theory. For the basic theory of Probability, Markov chains, random walks and their applications we refer the reader to \cite{Dur10, Fel68, Woe00}. These days, connections with operator algebras seem to manifest mostly in quantum information theory, where Markov chains are generalized to quantum channels. For the basic theory of operator on Hilbert space and their algebras we refer the reader to \cite{Arv76, Dav96, Dix77}. In this paper we resolve problems related to Markov chains motivated from studying operator algebras associated to stochastic matrices as in \cite{DOM14, DOM16}. 

\begin{definition}
 Let $\Omega$ be a countable set. A \emph{stochastic matrix} is a function $P :\Omega \times \Omega \rightarrow [0,1]$ such that for all $i\in \Omega$ we have $\sum_{j \in \Omega} P_{ij} = 1$. We let $\Gr(P)$ be the directed graph on $\Omega$ with directed edges $(i,j)$ when $P_{ij}>0$. We say that $P$ is \emph{irreducible} if $\Gr(P)$ is a strongly connected directed graph.
\end{definition}

The set $\Omega$ is called the state space of $P$. We denote by $P_{ij}^{(n)}:= (P^n)_{ij}$ the $ij$-th entry of $P^n$ for $n \in \bN$, and  we agree that $P^{(0)} = I$ will always mean the identity matrix. Next we define some analytic properties of $P$.

\begin{definition}
Let $P$ be a stochastic matrix over $\Omega$. 
\begin{enumerate}
\item
A state $i \in \Omega$ is \emph{recurrent} if the expected number of returns $\sum_{n\in \bN} P_{ii}^{(n)}$ is infinite. Otherwise we say that $i$ is transient. We say that $P$ is recurrent / transient if all of its states are recurrent / transient respectively.

\item
The \emph{spectral radius} of a state $i\in \Omega$ is $\limsup \sqrt[n]{P_{ii}^{(n)}}$. If every state has the same spectral radius, we will denote this $\rho(P)$ and call it the spectral radius of $P$. If $\rho(P)=1$, we say that $P$ is \emph{amenable}.
\end{enumerate}
\end{definition}

When $P$ is irreducible, every state has the same spectral radius, and a state $i \in \Omega$ is recurrent if and only if $P$ is recurrent. Clearly any recurrent stochastic matrix is amenable. The terminology of ``amenable" stochastic matrix is justified by Kesten's amenability criterion \cite{Kes59} (see also the discussion after Definition \ref{d:random-walk}).

Let $\{e_i\}_{i=1}^d$ be the standard basis for $\mathbb{R}^d$. A famous result of Polya from 1921 states that the simple random walk $P$ on $\mathbb{Z}^d$ given by $P_{x,x+e_i} = P_{x,x-e_i} = \frac{1}{2d}$ is recurrent if and only if $d\leq 2$. On the other hand, since $\mathbb{Z}^d$ is an amenable group, we see from Kesten's amenability criterion that $P$ is amenable as a stochastic matrix.

In \cite{DOM14} non-self-adjoint operator algebras $\T_+(P)$ associated to stochastic matrices were studied. A combination of \cite[Theorem 3.8]{DOM14} and \cite[Theorem 7.27]{DOM14} then shows that two stochastic matrices $P$ and $Q$ have isometrically isomorphic tensor algebras if and only if they have the same conditional probabilities as in item (ii) of the definition below. In this way this equivalence relation arises naturally from the solution of the isometric isomorphism problem for $\T_+(P)$ from \cite{DOM14}.

\begin{definition}
Let $P$ and $Q$ be stochastic matrices over $\Omega_P$ and $\Omega_Q$ respectively. We say that $P$ and $Q$ are 
\begin{enumerate}
\item
\emph{conjugate}, and denote this by $P \cong Q$, if there is a bijection $\sigma : \Omega_P \rightarrow \Omega_Q$ such that $P_{ij} = Q_{\sigma(i)\sigma(j)}$ for every $(i,j) \in \Gr(P)$.

\item
\emph{Doob equivalent}, and denote this by $P \sim_d Q$, if there exists a bijection $\sigma : \Omega_P \rightarrow \Omega_Q$ which is a graph isomorphism between $\Gr(P)$ and $\Gr(Q)$ such that for all $n,m \in \bN$ and for any $(i,k) \in \Gr(P^{n+m})$ we have
$$
\frac{P_{ij}^{(n)}P_{jk}^{(m)}}{P_{ik}^{(n+m)}} = \frac{Q_{\sigma(i)\sigma(j)}^{(n)}Q_{\sigma(j)\sigma(k)}^{(m)}}{Q_{\sigma(i)\sigma(k)}^{(n+m)}}.
$$
\end{enumerate}
\end{definition}

Our first main result of this paper is the characterization of Doob equivalence in terms of (generalized) Doob transforms for irreducible stochastic matrices. More precisely, in Theorem \ref{t:char-cond} we show that two irreducible stochastic matrices $P$ and $Q$ are Doob equivalent via $\sigma$ if and only if there exists a \emph{positive} function $h : \Omega_P \rightarrow \bR_+$ and eigenvalue $\lambda >0$ such that $Ph = \lambda h$, and $Q_{\sigma(i)\sigma(j)} = P^{(h,\lambda)}_{ij}:= \lambda^{-1} \frac{h(j)}{h(i)}P_{ij}$. This operation of applying an eigenpair $(h,\lambda)$ of $P$ to obtain a new stochastic matrix $P^{(h,\lambda)}$ as above is called (generalized) \emph{Doob transform} or \emph{h-transform} in the literature. 

When the eigenvalue $\lambda$ above is equal to $1$ we call $h$ \emph{harmonic}, and Doob transforms by $(h,1)$ are used to condition the transition probabilities of a Markov chains, as well as to study Martin boundaries of random walks. Beyond their intrinsic interest as discrete analogues from complex analysis, harmonic functions of Markov chains are intimately related with the behavior of a Markov chain $\{X_n\}$ as $n \rightarrow \infty$. For more on the theory of harmonic functions and the Martin boundary of Markov chains we refer the reader to \cite{KKS76}.

One of our motivating questions comes from \cite[Theorem 3.11]{DOM14} where it is shown that if $P$ and $Q$ are irreducible, recurrent and $P \sim_d Q$ with graph isomorphism $\sigma$, then $P$ and $Q$ are conjugate via $\sigma$. We were interested in finding optimal conditions on $P$ and $Q$ that guarantee that $P \sim_d Q$ implies $P \cong Q$. When this occurs, the isometric isomorphism of $\T_+(P)$ and $\T_+(Q)$ implies conjugacy of $P$ and $Q$.

It turns out that harmonic functions naturally come into play in the solution of this problem. Let $P$ be stochastic over $\Omega$. We say that that $P$ is \emph{strong Liouville} if all positive harmonic functions are constant. Strong Liouville property for an irreducible stochastic matrix means that the associated Markov chain $\{X_n\}$ converges in probability to a unique point as $n \rightarrow \infty$, regardless of initial distribution. Strong Liouville property also manifests naturally in other areas. For instance, in Riemannian geometry a famous result of Yau \cite{Yau75} shows that Riemannian manifolds with non-negative Ricci curvature have the strong Liouville property. 

A result of Derman \cite{Der54} shows that recurrent stochastic matrices are strong Liouville, so we ask if recurrence can be weakened to amenability or to strong Liouville property in \cite[Theorem 3.11]{DOM14}. Our characterization of Doob equivalence solves this problem and allows us to show (see Corollary \ref{c:optimal-classification}) that if $P$ and $Q$ are irreducible stochastic matrices with $\rho(P) = \rho(Q)$ such that either $P$ or $Q$ are strong Liouville, then the isometric isomorphism of $\T_+(P)$ and $\T_+(Q)$ implies conjugacy of $P$ and $Q$. This result is optimal in the sense that Example \ref{ex:amenable-non-SL} shows we cannot weaken recurrence in \cite[Theorem 3.11]{DOM14} to amenability without assuming strong Liouville property for one of the matrices.

The operator algebra $\T_+(P)$ comes equipped with the operator norm induced by its embedding into bounded operators on a specific Hilbert space $\H_P$ defined in Section \ref{s:prelim}. Once this is done, for $\ell\geq 1$ the identification $M_{\ell}(B(\H_P)) \cong B(\oplus_{n=1}^{\ell}\H_P)$ gives rise to a natural operator norm on $M_{\ell}(\T_+(P))$ as well via the embedding $M_{\ell}(\T_+(P)) \subseteq M_{\ell}(B(\H_P))$. From the definition of $\T_+(P)$ in Section \ref{s:prelim} it follows that $\H_P$ has minimal reducing subspaces $\H_{P,k}$ for $\T_+(P)$, each of which is associated to a state $k\in \Omega$.

\begin{definition}
Let $P$ be a \emph{finite} irreducible stochastic matrix over $\Omega$. A state $k \in \Omega$ is called \emph{completely peaking} for $\T_+(P)$ if there is an operator $T = [T_{pq}] \in M_{\ell}(\T_+(P))$ for some $\ell \geq 1$ such that
$$
\| [T_{pq}|_{\H_{P,k}}] \| > \max_{s \neq k} \| [T_{pq}|_{\H_{P,s}}] \|.
$$
Denote all completely peaking states by $\Omega_b$.
\end{definition}

When we associate to $k\in \Omega$ a representation $\pi_k :C^*(\T_+(P)) \rightarrow B(\H_{P,k})$ given by $\pi_k(T) = T|_{\H_{p,k}}$, we get that $k$ is completely peaking if and only if the representation $\pi_k$ is strongly peaking in the sense of \cite[Definition 7.1]{Arv11}. The study of peaking representations originates from Arveson's pioneering work on non-commutative analogues of Shilov and Choquet boundaries for operator algebras (see for instance \cite{Arv69, Arv72, Arv08, Arv11}).

In \cite{DOM16}, completely peaking states of $P$ were computed under the assumption of multiple arrival (see \cite[Corollary 3.14]{DOM16}). Based on this, the $C^*$-envelope of $\T_+(P)$ is computed and classified (see \cite[Section 3]{DOM16} and \cite[Theorem 5.5 \& Theorem 5.6]{DOM16}). In order to complete the picture, in \cite[Remark 3.12]{DOM16} it was asked if the assumption of multiple arrival can be dropped in \cite[Corollary 3.14]{DOM16}. 

Let $P$ be a finite irreducible stochastic matrix over $\Omega$. We say that $k \in \Omega$ is \emph{escorted} if for all $s\in \Omega$ with $P_{ks} > 0$ there exists $k' \neq k$ in $\Omega$ such that $P_{k's} > 0$. We strengthen \cite[Proposition 3.11 \& Corollary 3.14]{DOM16} and show in Theorem \ref{t:esc-bdry} that $k \in \Omega_b$ if and only if $k$ is escorted. This completes the picture in \cite{DOM16}, and answers the question in \cite[Remark 3.12]{DOM16} negatively through Example \ref{e:nmp}.

As an application (see Corollary \ref{c:max-mod}), we get a formula which reduces the computation of the norm of elements in $M_{\ell}(\T_+(P))$ to norms of the restrictions to the reducing subspaces $\H_{P,k}$ associated to escorted states $k$. This is a non-commutative analogue of the maximum modulus principle for elements in $M_{\ell}(\T_+(P))$.


This paper is divided into four sections including this introductory section. In Section \ref{s:prelim} we review some of the theory of random walks, the construction of the tensor algebra of a stochastic matrix and its completely peaking states. In Section \ref{S:Doob-equiv} we prove our first main result on the characterization of Doob equivalence. As a consequence, we obtain an strong rigidity result (Corollary \ref{c:optimal-classification}) which improves the combination of \cite[Theorem 3.11 \& Theorem 7.27]{DOM14}. Finally, in Section \ref{S:NC-peaking} we prove our second main result on completely peaking states and touch upon some of its applications.
	
\section{Preliminaries} \label{s:prelim}

Here we discuss some of the theory in probability and operator algebras, mostly to do with Markov chains and the construction of their tensor algebras. First we discuss examples of Markov chains arising from countable discrete groups.

\begin{definition} \label{d:random-walk}
Let $G$ be a discrete group, and $\mu$ a probability measure on $G$ such that $\supp (\mu)$ generates $G$ as a semigroup. A \emph{random walk} on $G$ is a stochastic matrix $P$ given by $P_{g,h} = \mu(g^{-1}h)$. We say that a random walk $P$ is \emph{symmetric} on $G$ if $\mu(g) = \mu(g^{-1})$ for any $g\in G$. We will say that a symmetric random walk on $G$ is \emph{simple} if $\mu$ is uniform on its support.
\end{definition}

Kesten's amenability criterion \cite{Kes59} then says that if some symmetric random walk on $G$ is amenable then $G$ is amenable as a group, and conversely that if $G$ is amenable as a group then \emph{all} symmetric random walks on $G$ are amenable. When $P$ is a simple (symmetric) random walk on a group $G$ determined by (uniform) $\mu$, then $S:=\supp(\mu)$ must be a finite generating subset of $G$ such that $S^{-1} = S$, and for every $g \in S$ we have $\mu(g) = \frac{1}{|S|}$.

\begin{definition}
Let $P$ be an irreducible stochastic matrix over $\Omega$. A non-zero function $h:\Omega\rightarrow \mathbb{R}$ is an \emph{eigenfunction} of $P$ if $Ph=\lambda h$ for some $\lambda \in \bR$. We say that a function $h$ is \emph{harmonic} if $Ph = h$. We say that a positive harmonic function $h$ is \emph{minimal} if whenever $g$ is harmonic such that $0 \leq g \leq h$, then there is $C\geq 0$ such that $g = C \cdot h$. Finally, we say that $P$ is \emph{strong Liouville} if all positive harmonic functions are constant.
\end{definition}

Let $P$ be an irreducible stochastic matrix over $\Omega$, and let $0\in \Omega$ be some fixed element. The set $K_0$ of positive harmonic functions $h$ of $P$ such that $h(0)=1$ is a compact convex set under the topology of uniform convergence on finite sets. The extreme points of $K_0$ are then exactly the minimal positive harmonic functions $h$ with $h(0)=1$, and by Krein-Milman theorem we get that their closed convex hull is the set $K_0$ back again. The following theorem of Choquet and Deny \cite{CD60} provides us with a characterization of all minimal positive harmonic functions of random walks on $\mathbb{Z}^d$.

\begin{theorem}
Let $P$ be a random walk on $\mathbb{Z}^d$ determined by a measure $\mu$. Then $h\geq 0$ is a minimal harmonic function with $h(0) = 1$ if and only if $h(x) = e^{\langle \alpha, x \rangle}$ for $\alpha \in \mathbb{R}^d$ such that $\sum_{y \in \mathbb{Z}^d} e^{\langle \alpha, y \rangle} \cdot \mu(y) = 1$.
\end{theorem}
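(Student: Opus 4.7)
The plan is to treat the two directions separately; translation invariance of $P$ on the abelian group $\mathbb{Z}^d$ is the engine in both. For the easy direction $(\Leftarrow)$, one first computes $(Ph)(x) = \sum_z \mu(z)\, e^{\langle\alpha, x+z\rangle} = h(x)\sum_z \mu(z)\, e^{\langle\alpha,z\rangle} = h(x)$, verifying harmonicity of $h(x) = e^{\langle\alpha,x\rangle}$ under the stated summability condition. For minimality, I would apply a Doob transform: given a harmonic $g$ with $0 \leq g \leq h$, set $\tilde g := g/h$ and observe that $\tilde g$ is harmonic for $\tilde P := P^{(h,1)}$. The key observation is that $\tilde P$ is again a translation-invariant random walk on $\mathbb{Z}^d$, with transition measure $\tilde\mu(z) := e^{\langle\alpha,z\rangle}\mu(z)$, which is a probability measure precisely because of the stated hypothesis. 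Since $0 \leq \tilde g \leq 1$, the classical Choquet--Deny theorem on bounded harmonic functions for random walks on abelian groups forces $\tilde g \equiv c$, whence $g = c\,h$.

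For the harder direction $(\Rightarrow)$, I would exploit translation invariance directly. First, irreducibility gives $h > 0$ everywhere: for any $x_0$ choose $n$ with $P^n(x_0, 0) > 0$ and expand $h(x_0) \geq P^n(x_0, 0)h(0) > 0$. Next, fix $y \in \mathbb{Z}^d$ and choose $n$ with $\mu^{*n}(y) > 0$ (possible since $\supp(\mu)$ generates $\mathbb{Z}^d$ as a semigroup). Expanding harmonicity yields $h(x) = \sum_z \mu^{*n}(z)\, h(x+z) \geq \mu^{*n}(y)\, h(x+y)$, so the translate $h_y(x) := h(x+y)$ is itself harmonic (by translation invariance of $P$) and is dominated by a scalar multiple of $h$. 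Minimality of $h$ therefore yields $h_y = c_y h$, and evaluating at $x = 0$ gives $c_y = h(y)$. Hence $h(x+y) = h(x)h(y)$ for all $x, y \in \mathbb{Z}^d$, making $\log h$ a group homomorphism $\mathbb{Z}^d \to \mathbb{R}$, so $h(x) = e^{\langle\alpha, x\rangle}$ for some $\alpha \in \mathbb{R}^d$. The summability condition then follows from $1 = h(0) = (Ph)(0) = \sum_y \mu(y)\,e^{\langle\alpha, y\rangle}$.

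The main obstacle is the minimality step in $(\Leftarrow)$, which reduces to the bounded Liouville property on $\mathbb{Z}^d$ (bounded harmonic functions of random walks on abelian groups are constant), a classical but nontrivial result. If one wishes to avoid citing it, one can try to mimic the translation argument of $(\Rightarrow)$ directly on $\tilde g$ on the Doob-transformed chain, using boundedness of $\tilde g$ in place of minimality of $h$ to argue that $\tilde g$ is constant. Either way, the argument is powered by translation invariance of the random walk on the abelian group $\mathbb{Z}^d$, and the appearance of exponentials is ultimately a reflection of the fact that positive multiplicative characters of $\mathbb{Z}^d$ are exactly the exponentials.
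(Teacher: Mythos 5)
The paper does not prove this statement at all: it is quoted as a classical result of Choquet and Deny \cite{CD60}, so there is no internal proof to compare against. Judged on its own, your argument is the standard route and is essentially correct. The $(\Rightarrow)$ direction is complete and self-contained: positivity of $h$ from irreducibility, harmonicity and domination of the translate $h_y$ by $\mu^{*n}(y)^{-1}h$, minimality turning domination into proportionality, evaluation at $0$ giving $h(x+y)=h(x)h(y)$, and hence $h=e^{\langle\alpha,\cdot\rangle}$ with the normalization $\sum_y e^{\langle\alpha,y\rangle}\mu(y)=h(0)=1$. The $(\Leftarrow)$ direction is also sound: the harmonicity computation is immediate, and the Doob transform $P^{(h,1)}$ is again a translation-invariant walk with step distribution $\tilde\mu(z)=e^{\langle\alpha,z\rangle}\mu(z)$ (a probability measure by hypothesis, with $\supp\tilde\mu=\supp\mu$ still generating), so $\tilde g=g/h\in[0,1]$ is a bounded $\tilde P$-harmonic function and the bounded Liouville property for random walks on abelian groups gives $g=c\,h$.

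Two caveats. First, be aware that the bounded Liouville property you invoke is itself usually called the Choquet--Deny theorem; it is a genuinely different statement from the one being proved and has independent proofs (the original Choquet--Deny convexity argument, or martingale convergence combined with the Hewitt--Savage zero--one law), so the citation is legitimate and not circular, but it is where all the nontrivial content of the minimality direction sits. Second, your suggested workaround --- mimicking the translation argument of $(\Rightarrow)$ on $\tilde g$ with boundedness replacing minimality --- does not work as stated: in $(\Rightarrow)$ it is precisely minimality that upgrades the inequality $h_y\leq C\,h$ to an identity $h_y=c_y h$, and boundedness of $\tilde g$ gives no analogous mechanism, so constancy of $\tilde g$ cannot be extracted by translation alone. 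If you want a self-contained proof of that step you must actually run one of the standard proofs of the bounded Choquet--Deny theorem rather than a formal analogue of the minimality argument.
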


A simple multivariate calculus minimization argument shows that the simple random walk on $\mathbb{Z}^d$ above is strongly Liouville. However, if we take a \emph{biased} random walk on $\bZ$ in the sense that $P_{0,1} \neq P_{0,-1}$, the above shows that $P$ is not strong Liouville.

Next we will define operator algebras associated to stochastic matrices as studied in \cite{DOM14, DOM16}. Let $P$ be a stochastic matrix and define for $k \in \Omega$ the Hilbert space $\H_{P,k}$ to be the closed linear span of the orthonormal basis $\{ \xi_{m,j,k} \}_ {(j,k) \in \Gr(P^m)}$. Let $\H_P$ be the direct sum $\oplus_{k\in \Omega} \H_{P,k}$. Fix $n\geq 0$ and let $\Arv(P)_n$ be the collection of complex matrices $A= [a_{ij}]$ over $\Omega$ such that $a_{ij}=0$ whenever $(i,j)\notin \Gr(P^n)$ and $\sup_{j\in \Omega} \sum_{i \in \Omega} |a_{ij}|^2 < \infty$. For each $A \in \Arv(P)_n$ we define an operator $S^{(n)}_A$ on $\H_P$ by setting
$$
S_A^{(n)}(\xi_{m,j,k}) = \sum_{i\in \Omega} a_{ij}\sqrt{\frac{P_{ij}^{(n)}P_{jk}^{(m)}}{P_{ik}^{(n+m)}}} \xi_{n+m,i,k}.
$$
Then $S_A^{(n)}$ is a bounded operator on $\H_P$. The tensor algebra $\T_+(P)$ associated to the stochastic matrix $P$ is then given by
$$
\T_+(P) := \overline{\Alg} \{ \ S_A^{(n)} \ | \ A \in \Arv(P)_n, \ n \geq 0 \ \}.
$$

Although the definition of the operators $S_A^{(n)}$ above uses the conditional probabilities of $P$ explicitly, it is important to mention that the algebra $\T_+(P)$ arises abstractly from an \emph{Arveson-Stinespring subproduct system} associated to the matrix $P$ (See \cite[Theorem 3.4]{DOM14}), and that extracting these conditional probabilities from the isometric isomorphism class of $\T_+(P)$ is the main thrust of \cite{DOM14}. Subproduct systems originate from the study of quantum Markov processes in the form of $E_0$-semigroups and cp-semigroups \cite{Arv03, BM10} and were studied systematically by Shalit and Solel in \cite{SS09}.

\begin{remark}
Let $P$ be a stochastic matrix over $\Omega$. The definition of $\T_+(P)$ in \cite[Definition 6.1]{DOM14} is slightly different from the one we give here, and this needs some justification. We do this here, but we use some of the theory of $C^* / W^*$-correspondences used in \cite{DOM14, DOM16} to accomplish this. Let $\rho : \ell^{\infty}(\Omega) \rightarrow B(\ell^2(\Omega))$ be the *-representation of $\ell^{\infty}(\Omega)$ as left multiplication $\rho(f)(g)(i) = f(i)g(i)$ for $i\in \Omega$. By \cite[Corollary 2.74]{RW98} we get that $\rho$ induces a faithful *-representation $\Ind(\rho) : \L(\bigoplus_{n = 0}^{\infty}\Arv(P)_n) \rightarrow B \big(\bigoplus_{n = 0}^{\infty}(\Arv(P)_n \otimes_{\rho} \ell^2(\Omega)) \big)$ given by $\Ind(\rho)(T)(\xi \otimes h) = T\xi \otimes h$. Then $\Ind(\rho)$ is an injective *-representation such that the image of $\T_+(P)$ under $\Ind(\rho)$ coincides with the definition of $\T_+(P)$ given above. More precisely, this is realized via a unitary operator from $\H_P$ to $\bigoplus_{n = 0}^{\infty}(\Arv(P)_n \otimes_{\rho} \ell^2(\Omega))$ given by $\xi_{m,j,k} \mapsto E_{jk} \otimes e_k$ for $(j,k) \in \Gr(P^m)$ where $E_{jk} \in \Arv(P)_m$ is a matrix unit and $e_k$ is the characteristic function of $k \in \Omega$.
\end{remark}

\begin{definition}
Let $P$ be an irreducible stochastic matrix. A cyclic decomposition for $P$ is a partition $\Omega = \Omega_0 \sqcup \Omega_1 \sqcup ... \sqcup \Omega_{p-1}$ such that $P_{ij} >0$ and $i\in \Omega_{\ell}$ imply $j \in \Omega_{\ell +1 \mod p}$. The period $p$ of $P$ is the largest possible number of components in a cyclic decomposition for $P$.
\end{definition}

When $P$ is irreducible, it is easy to show that its period must be finite. Suppose now that $P$ is a finite matrix. For the purpose of computing completely peaking states in \cite{DOM16}, we defined a state $i\in \Omega$ as \emph{exclusive} if it comprises its own cyclic component in a cyclic decomposition for $P$ (see also \cite[Definition 3.9 \& Lemma 3.10]{DOM16} for an equivalent definition). We denote by $\Omega_e$ the set of exclusive states. When at least one state is non-exclusive in $P$ (or equivalently when $P$ is not a cycle), it follows from \cite[Section 3]{DOM16} that there is a \emph{unique} smallest non-empty subset $\Omega_b \subseteq \Omega$ such that for any $T = [T_{pq}] \in M_{\ell}(\T_+(P))$ and $1 \leq \ell \in \bN$ we have
$$
\| T \| = \max_{k \in \Omega_b} \| [T_{pq}|_{\H_{P,k}}] \|.
$$
Hence, the norm of any element $M_{\ell}(\T_+(P))$ for any $1 \leq \ell \in \bN$ is retained by ``evaluating" on appropriate restrictions.

For a finite irreducible stochastic matrix $P$ over $\Omega$, we say that $P$ has \emph{multiple arrival} if whenever $s \in \Omega$ is non-exclusive and there is $k\neq s$  such that $P_{ks}^{(n)} >0$, there is $k' \neq k$ such that $P_{k's}^{(n)} >0$. In \cite[Proposition 3.11]{DOM16} we computed $\Omega_b$ when $P$ has multiple arrival and showed that $\Omega_b = \Omega \setminus \Omega_e$. In Section \ref{S:NC-peaking} we will show that this equality may fail without the assumption of multiple arrival.

\section{Doob equivalence} \label{S:Doob-equiv}

In this section we connect Doob equivalence with Doob transforms, allowing us to weaken the assumption of recurrence in \cite[Theorem 3.11]{DOM14}. To simplify our proofs, we will often suppress the isomorphism $\sigma$ between the graphs of $P$ and $Q$, and assume that $\sigma = \id$ is the identity.

\begin{definition}\label{d:doob}
Let $P$ be an irreducible stochastic matrix. Given a positive non-zero eigenfunction $h$ with a (necessarily positive) eigenvector $\lambda$ for $P$, we define the (generalized) \emph{Doob transform} $P^{(h,\lambda)}$ of $P$ by $h$ via
\begin{equation*}
P^{(h,\lambda)}_{ij} = \lambda^{-1} \frac{h(j)}{h(i)} P_{ij} \qquad  \text{ for } i,\,j \in \Omega.
\end{equation*}
\end{definition}

The entries of $P^{(h,\lambda)}$ are non-negative, and $P^{(h,\lambda)}$ is easily seen to be stochastic.

\begin{lemma}\label{l:doob-ratio} 
Let $Q = P^{(h,\lambda)}_{ij}$ and define $\r{ij}{n} := \lambda^{-n} \frac{h(j)}{h(i)}$. Then $\r{ij}{n}\r{jk}{m} = \r{ik}{n+m}$ and $Q^{(n)}_{ij} = \r{ij}{n} P^{(n)}_{ij}$.
\end{lemma}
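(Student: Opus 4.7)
The plan is to verify both identities by direct, elementary manipulations; no conceptual input beyond the definitions should be needed.

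For the telescoping identity $\r{ij}{n}\r{jk}{m} = \r{ik}{n+m}$, I would simply substitute the defining formula $\r{ij}{n} = \lambda^{-n} h(j)/h(i)$. The intermediate value $h(j)$ cancels between the numerator of the first factor and the denominator of the second, while $\lambda^{-n} \cdot \lambda^{-m} = \lambda^{-(n+m)}$, yielding $\r{ik}{n+m}$ in a single line. No induction is required here.

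For the formula $Q^{(n)}_{ij} = \r{ij}{n} P^{(n)}_{ij}$, I would proceed by induction on $n$. The case $n = 0$ is immediate from $Q^{(0)} = P^{(0)} = I$ together with $\r{ii}{0} = 1$, and the case $n = 1$ is precisely the definition of $P^{(h,\lambda)}$. For the inductive step, expanding
$$
Q^{(n+1)}_{ik} \;=\; \sum_{j \in \Omega} Q^{(n)}_{ij}\, Q_{jk}
$$
and applying the inductive hypothesis to each factor produces $\sum_{j} \r{ij}{n} P^{(n)}_{ij} \, \r{jk}{1} P_{jk}$. Invoking the first identity, $\r{ij}{n}\r{jk}{1} = \r{ik}{n+1}$, which is independent of $j$, I can pull this factor outside the sum and recognise the remainder $\sum_{j} P^{(n)}_{ij} P_{jk}$ as $P^{(n+1)}_{ik}$, completing the induction.

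I do not anticipate any serious obstacle. The only point worth noting is that the ratios $h(j)/h(i)$ are well defined because $h$ is strictly positive (a positive eigenfunction of an irreducible nonnegative matrix is nowhere zero by a standard Perron--Frobenius argument), so none of the denominators in $\r{ij}{n}$ vanish, and all the sums involved are sums of nonnegative terms for stochastic matrices and therefore pose no convergence issue.
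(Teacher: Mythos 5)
Your proof is correct and follows essentially the same route as the paper: the telescoping identity by direct substitution and cancellation of $h(j)$, followed by induction on $n$ using the Chapman--Kolmogorov expansion and pulling out the $j$-independent factor $\r{ik}{n+1}$. The added remark on strict positivity of $h$ is a welcome (if standard) justification that the ratios are well defined.
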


\begin{proof}
By definition we clearly see that $\r{ij}{n}\r{jk}{m} = \r{ik}{n+m}$. Then, by definition we also have that $Q_{ij} = \r{ij}{1} P_{ij}$ and $Q^{(0)}_{ij} = \r{ij}{0} P^{(0)}_{ij}$. Then towards proof by induction if we assume $Q^{(n)}_{ij} = \r{ij}{n} P^{(n)}_{ij}$, we get that, 
\begin{equation*}
Q^{(n+1)}_{ij} = \sum_k \q{ik}{n}Q_{kj} = \sum_k \r{ik}{n} \r{kj}{1} \p{ik}{n} P_{kj}  = \r{ij}{n+1}P^{(n+1)}_{ij},
\end{equation*}
so that by induction $Q^{(n)}_{ij} = \r{ij}{n} P^{(n)}_{ij}$ for all $n$.
\end{proof}

\begin{corollary}\label{c:doob-radius}
The spectral radius of  $P^{(h,\lambda)}$ is $\rho(P^{(h,\lambda)}) = \lambda^{-1} \rho(P)$.
\end{corollary}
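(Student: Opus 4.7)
The plan is to obtain the corollary as a direct computation from Lemma \ref{l:doob-ratio} applied along the diagonal, combined with the observation that $P^{(h,\lambda)}$ has the same directed graph as $P$ (since $h > 0$), so it is irreducible exactly when $P$ is, which guarantees the common spectral radius $\rho(P^{(h,\lambda)})$ is well defined.

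First I would specialize the identity $Q^{(n)}_{ij} = r_{ij}^{(n)} P^{(n)}_{ij}$ from Lemma \ref{l:doob-ratio} to the case $i = j$. Since $r_{ii}^{(n)} = \lambda^{-n} \tfrac{h(i)}{h(i)} = \lambda^{-n}$, this gives the clean relation
\[
Q^{(n)}_{ii} = \lambda^{-n} P^{(n)}_{ii}
\]
for every state $i$ and every $n \in \bN$.

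Next I would take $n$-th roots and $\limsup$ in $n$. Because $\lambda > 0$ is a fixed constant, $\sqrt[n]{\lambda^{-n}} = \lambda^{-1}$ pulls out of the $\limsup$, yielding
\[
\limsup_{n \to \infty} \sqrt[n]{Q^{(n)}_{ii}} \;=\; \lambda^{-1} \limsup_{n \to \infty} \sqrt[n]{P^{(n)}_{ii}}.
\]
By the definition of the spectral radius of a state, the right-hand side equals $\lambda^{-1} \rho(P)$ (here I am using that $P$ is irreducible so all states share the same spectral radius). Finally I would note that since $\Gr(Q) = \Gr(P)$ (the factor $\lambda^{-1} h(j)/h(i)$ is strictly positive), $Q = P^{(h,\lambda)}$ is also irreducible, so its state-wise spectral radii also coincide and the common value $\rho(P^{(h,\lambda)})$ equals $\lambda^{-1} \rho(P)$.

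There is essentially no obstacle here; the only subtlety worth flagging explicitly is the justification that the spectral radius $\rho(P^{(h,\lambda)})$ is well defined as a single number, which is why I would mention irreducibility of $Q$. Everything else is a one-line consequence of Lemma \ref{l:doob-ratio}.
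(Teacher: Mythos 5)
Your argument is correct and is exactly the intended one: the paper leaves the corollary as an immediate consequence of Lemma \ref{l:doob-ratio}, namely $Q^{(n)}_{ii}=\lambda^{-n}P^{(n)}_{ii}$ followed by taking $n$-th roots and $\limsup$. Your extra remark that $\Gr(P^{(h,\lambda)})=\Gr(P)$ (so the spectral radius of the Doob transform is well defined) is a sensible, harmless addition.
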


\begin{theorem} \label{t:char-cond}
Let $P$ be an irreducible stochastic matrix. A stochastic matrix $Q$ is Doob equivalent to $P$ if and only if it is conjugate to a Doob transform of $P$.
\end{theorem}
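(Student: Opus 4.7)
My plan is to prove the two directions separately. The ``if'' direction is a short computation based on Lemma \ref{l:doob-ratio}, while the ``only if'' direction extracts a positive eigenpair $(h,\lambda)$ from the Doob equivalence data via a cocycle argument.

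For the easy direction, suppose $Q_{\sigma(i)\sigma(j)} = P^{(h,\lambda)}_{ij}$ with $Ph = \lambda h$. Lemma \ref{l:doob-ratio} gives $Q^{(n)}_{\sigma(i)\sigma(j)} = \r{ij}{n}\, P^{(n)}_{ij}$, where the factors $\r{ij}{n} = \lambda^{-n} h(j)/h(i)$ satisfy the cocycle $\r{ij}{n}\r{jk}{m} = \r{ik}{n+m}$. Substituting into the Doob ratio causes the $r$-factors to cancel exactly, producing the defining equality of Doob equivalence.

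For the converse, assume $P \sim_d Q$ with $\sigma = \id$ and define $\phi(i,j,n) := Q^{(n)}_{ij}/P^{(n)}_{ij}$ whenever $(i,j) \in \Gr(P^n)$. The Doob equivalence condition is equivalent to the cocycle identity $\phi(i,j,n)\phi(j,k,m) = \phi(i,k,n+m)$ whenever both sides are defined, so iterating along any path $i_0 \to \cdots \to i_n$ in $\Gr(P)$ gives $\phi(i_0,i_n,n) = \prod_{s=0}^{n-1}\phi(i_s,i_{s+1},1)$; in particular $\phi$ is path-independent. The first key step is to show that $\phi(i,i,n)$ depends only on $n$: for distinct states $i,j$, irreducibility provides paths $i\to j$ of length $a$ and $j\to i$ of length $b$, and expanding $\phi(i,i,a+n+b)$ via the cocycle by inserting either a self-loop of length $n$ at $j$ or one at $i$ forces $\phi(i,i,n) = \phi(j,j,n)$. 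Writing $c_n$ for this common value, the cocycle gives $c_{n+m} = c_n c_m$ on the semigroup of admissible cycle lengths, from which standard arguments yield a unique $\lambda > 0$ with $c_n = \lambda^{-n}$.

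Finally, fix $0 \in \Omega$, set $h(0) := 1$, and define $h(i) := \lambda^n \phi(0,i,n)$ for any $n$ with $(0,i) \in \Gr(P^n)$, which exists by irreducibility. To see this is well-defined, pick any $r$ with $(i,0) \in \Gr(P^r)$; the cocycle gives $\phi(0,i,n)\phi(i,0,r) = \phi(0,0,n+r) = \lambda^{-(n+r)}$, so $\lambda^n \phi(0,i,n) = \lambda^{-r}/\phi(i,0,r)$ is independent of the choice of $n$. Positivity of $h$ is immediate from positivity of $\phi$ on its domain. A one-step cocycle computation then shows $\phi(i,j,1) = \lambda^{-1} h(j)/h(i)$ for $(i,j) \in \Gr(P)$, i.e.\ $Q_{ij} = P^{(h,\lambda)}_{ij}$ on $\Gr(P)$ and trivially elsewhere. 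Summing over $j$ and using stochasticity of $Q$ yields $(Ph)(i) = \lambda h(i)$. I expect the main obstacle to be handling the periodic case, where $\phi(0,i,n)$ is only defined for $n$ in a single residue class modulo the period $p$; this is resolved automatically by the same cocycle machinery, since any two admissible lengths differ by a multiple of $p$ on which $c_{\cdot}$ is multiplicative.
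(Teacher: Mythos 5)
Your proposal is correct and follows essentially the same route as the paper: the easy direction via Lemma \ref{l:doob-ratio}, and the converse by showing the ratio $\phi(i,j,n)=Q^{(n)}_{ij}/P^{(n)}_{ij}$ is a cocycle, extracting $\lambda$ from the diagonal values (the paper's property (ii)), and defining $h(i)=\lambda^n\phi(0,i,n)$ before summing over $j$ to get the eigenpair. The only cosmetic difference is that you verify well-definedness of $h$ by closing a loop through the base state, whereas the paper uses its shift property (iii) of the cocycle; both are equivalent one-line consequences of the same identity.
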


\begin{proof}
From Lemma \ref{l:doob-ratio} it is immediate that $P^{(h,\lambda)}$ is Doob equivalent to $P$. Conversely, suppose $Q$ is conditionally equivalent to $P$. For $i,\, j \in \Omega$ and $n \in \bN$ such that $\p{ij}{n} > 0$, define 
\begin{equation*}
\a{ij}{n} := \frac{\q{ij}{n}}{\p{ij}{n}},
\end{equation*}
 Fix $i_0 \in \Omega$ and $n_0 \in \bN$ such that $\p{i_0i_0}{n_0} > 0$, and define $\lambda := \left(\a{i_0i_0}{n_0}\right)^{-1/n_0} >0$. Let $d_{ij} := \min\{n\,|\,P^{(n)}_{ij} > 0\}$, and set $h(i_0) = 1$ and $h(i) = \lambda^{d_{i_0i}} \a{i_0i}{d_{i_0i}}$. 

We claim that $h$ is a positive eigenfunction of $P$ with eigenvalue $\lambda$ such that $Q=P^{(h,\lambda)}$, but we first establish some properties of $\alpha$ and $\lambda$.

\begin{enumerate}
    \item $\a{ij}{n}\a{jk}{m} = \a{ik}{n+m}$ whenever $(i,j) \in \Gr(P^n), (j,k) \in \Gr(P^m)$,
    \item $\a{ii}{n} = \lambda^{-n}$  whenever $(i,i) \in \Gr(P^n)$,
    \item $\a{ij}{n+m} = \lambda^{-n}\a{ij}{m}$ whenever  $(i,j) \in \Gr(P^n) \cap \Gr(P^{n+m})$.
\end{enumerate}

Claim (i) follows from the definition of Doob equivalence. To prove (ii), let $m_1,\,m_2 \in \bN$ be such that $\p{i_0i}{m_1},\, \p{ii_0}{m_2} > 0$. In view of (i), 
\begin{equation*}
    \left(\a{ii}{n}\right)^{n_0(m_1 + m_2)} = 
    \left(\a{ii_0}{m_1}\a{i_0i}{m_2}\right)^{n n_0} = 
    \left(\a{i_0i_0}{n}\right)^{n_0(m_1 + m_2)},
\end{equation*}
and
\begin{equation*}
\left(\a{i_0i_0}{n}\right)^{n_0} = 
\left(\a{i_0i_0}{n_0}\right)^n = (\lambda^{-n_0})^n = (\lambda^{-n})^{n_0}.
\end{equation*}		
Thus (ii) is proved. To prove (iii) take $k$ such that $(j,i) \in \Gr(P^\ell) > 0$. Applying (i) and (ii),
\begin{equation*}
\alpha_{ij}^{(n+m)} \alpha_{ji}^{(\ell)}= \alpha_{ii}^{(n +m +\ell)} = \lambda^{-(n +m+ \ell)} = \lambda^{-n} \alpha_{ii}^{(m+\ell)} = \lambda^{-n}\alpha_{ij}^{(m)} \alpha_{ji}^{(\ell)},
\end{equation*}
and (iii) is proved. Taking advantage of property (iii), we see that 
$h(i) = \lambda^n\a{i_0i}{n}$ for all $n$ such that $(i_0,i) \in \Gr(P^n)$. As a consequence, for $(i,j) \in \Gr(P)$, and $n$ such that $\p{i_0i}{n} > 0$, we have
\begin{equation*}
  Q_{ij} 
  = \a{ij}{1} P_{ij} 
  = \frac{  \a{i_0j}{n+1}  }{  \a{i_0i}{n}  }P_{ij} 
  = \lambda^{-1} \, \frac{ \lambda^{n+1} \a{i_0j}{n+1}  }
                         { \lambda^{n} \a{i_0i}{n}  } P_{ij} 
  = \lambda^{-1} \frac{h(j)}{h(i)} P_{ij}.
\end{equation*}
Summing both sides over $j$, we see that $(h, \lambda)$ is an eigenpair for $P$. Hence, $Q = P^{(h,\lambda)}$ is a generalized Doob transform of $P$.
\end{proof}

\begin{corollary} \label{c:amenable-strong-liouville-char}
Let $P$ be an irreducible stochastic matrix. $P$ is strong Liouville if and only if for every stochastic matrix $Q$ such that $P \sim_d Q$ and $\rho(P) = \rho(Q)$ we have $Q \cong P$. In particular, if $P$ is an irreducible, amenable strong Liouville stochastic matrix, then for amenable $Q$ we have that $P \sim_d Q$ implies $P \cong Q$.
\end{corollary}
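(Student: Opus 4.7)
The forward direction will follow directly from combining Theorem~\ref{t:char-cond} and Corollary~\ref{c:doob-radius}. Given $P$ strong Liouville and $Q \sim_d P$ with $\rho(Q) = \rho(P)$, Theorem~\ref{t:char-cond} produces a positive eigenpair $(h,\lambda)$ of $P$ with $Q \cong P^{(h,\lambda)}$. Corollary~\ref{c:doob-radius} then yields $\lambda^{-1}\rho(P) = \rho(Q) = \rho(P)$, forcing $\lambda = 1$, so $h$ is a positive harmonic function. The strong Liouville property makes $h$ constant, whence $P^{(h,1)} = P$ and $Q \cong P$. The ``in particular'' statement follows at once: if $P$ is amenable and strong Liouville and $Q$ is amenable with $P \sim_d Q$, then $\rho(P) = \rho(Q) = 1$ and the forward direction applies.

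For the backward direction, I would argue the contrapositive. Assume $P$ is not strong Liouville and fix a positive non-constant harmonic function $h_0$ on $\Omega$ (necessarily strictly positive by irreducibility of $P$). Since $\Gr(P)$ is strongly connected and $h_0$ is non-constant, one may choose an edge $(i_0, j_0) \in \Gr(P)$ along which $h_0(i_0) \neq h_0(j_0)$. Setting $h_c := h_0 + c \cdot \mathbf{1}$ for $c > 0$ yields a one-parameter family of positive non-constant harmonic functions of $P$; define $Q_c := P^{(h_c, 1)}$. By Theorem~\ref{t:char-cond} we have $Q_c \sim_d P$, and by Corollary~\ref{c:doob-radius} we have $\rho(Q_c) = \rho(P)$, so it remains to find a value of $c > 0$ for which $Q_c \not\cong P$.

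The key observation is that $(Q_c)_{i_0 j_0} = \frac{h_0(j_0)+c}{h_0(i_0)+c}\,P_{i_0 j_0}$ is a strictly monotone (hence injective) function of $c \in (0,\infty)$, since $h_0(i_0) \neq h_0(j_0)$ and $P_{i_0 j_0} > 0$. Since $\Omega$ is countable, the set $E := \{P_{kl} : (k,l) \in \Gr(P)\}$ of positive entries of $P$ is countable, so the preimage $\{c > 0 : (Q_c)_{i_0 j_0} \in E\}$ is countable as well. Picking any $c$ in the uncountable complement gives $(Q_c)_{i_0 j_0} \notin E$, which rules out any conjugacy $Q_c \cong P$: a conjugating bijection $\sigma$ would force $(Q_c)_{i_0 j_0} = P_{\sigma(i_0)\sigma(j_0)} \in E$. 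This produces the required counterexample. The main subtlety I expect is the setup for this entry-counting argument --- namely the strict positivity of $h_0$ and the existence of a $\Gr(P)$-edge where $h_0$ differs --- both of which rest on irreducibility together with the non-triviality of $h_0$.
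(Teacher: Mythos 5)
Your proof is correct. The forward implication and the ``in particular'' statement are exactly the paper's argument: Theorem~\ref{t:char-cond} gives $Q \cong P^{(h,\lambda)}$, Corollary~\ref{c:doob-radius} forces $\lambda=1$, and strong Liouville makes $h$ constant. Where you genuinely diverge is the backward implication. The paper argues it directly: assuming the rigidity property, for any positive harmonic $h$ it forms $P^{(h,1)}$, notes $P \sim_d P^{(h,1)}$ with equal spectral radii, and concludes $P = P^{(h,1)}$ (the conjugacy being taken via the identity, in line with the paper's convention of suppressing $\sigma$), hence $h$ is constant. You instead argue the contrapositive and, crucially, guard against the possibility that $P^{(h_0,1)}$ for a single non-constant harmonic $h_0$ might accidentally be conjugate to $P$ through some non-identity bijection --- a possibility that does occur (for instance, the Doob transform of a biased nearest-neighbour walk on $\bZ$ by $h(i)=(q/p)^i$ is the reflected walk, conjugate to $P$ via $i\mapsto -i$). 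Your perturbation family $h_c = h_0 + c\cdot\mathbf{1}$, the strict monotonicity of $c \mapsto (Q_c)_{i_0 j_0}$, and the countability of the entry set $E$ rule out conjugacy by \emph{any} bijection (with the paper's definition of conjugacy one should add the one-line row-sum observation that a conjugacy matches up the positive entries of the two matrices in either direction, so $(Q_c)_{i_0 j_0}\in E$ follows regardless of which way $\sigma$ is read). So your argument establishes the statement even under the stronger reading in which $Q \cong P$ means conjugate via an arbitrary bijection, not necessarily the same $\sigma$ implementing the Doob equivalence; the paper's shorter route implicitly relies on the weaker reading where the conjugacy is along that same $\sigma$. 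What the paper's approach buys is brevity; what yours buys is robustness to this subtlety, at the cost of the extra observations that $h_c$ is harmonic (using $P\mathbf{1}=\mathbf{1}$), that $h_0$ is strictly positive, and that some edge of $\Gr(P)$ sees different values of $h_0$ --- all of which you justify correctly.
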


\begin{proof}
Suppose that $P$ is strong Liouville, $P \sim_d Q$ and $\rho(P) = \rho(Q)$. By Theorem \ref{t:char-cond} we know that $Q$ is obtained from eigen-pair $(\lambda, h)$ of $P$. By Corollary \ref{c:doob-radius} we get that $\lambda = 1$, so the eigenfunction $h$ is harmonic and must then be constant. This means that $P = Q$.

Conversely, suppose that $P \sim_d Q$ and $\rho(P) = \rho(Q)$ imply $P = Q$. Let $h$ be a harmonic function for $P$. By Theorem \ref{t:char-cond} we know that $P \sim_d P^{(h,1)}$ and by Corollary \ref{c:doob-radius} we get that $\rho(P) = \rho(P^{(h,1)})$. Hence, by assumption $P= P^{(h,1)}$, so that $h$ must be constant. This means that $P$ is strong Liouville.
\end{proof}

Recurrent matrices are strong Liouville by \cite{Der54}, and are also amenable. Hence, we get \cite[Theorem 3.11]{DOM14} as a special case of Corollary \ref{c:amenable-strong-liouville-char}. On the other hand, there are many examples of strong Liouville transient stochastic matrices for which Corollary \ref{c:amenable-strong-liouville-char} applies but \cite[Theorem 3.11]{DOM14} does not. See for instance \cite{Eri89} where it is shown that a symmetric random walk on a finitely generated group with polynomial growth of bounded order must be strong Liouville.

We provide examples illustrating that the assumptions of strong Liouville and amenability are logically independent. We first recall an example of an amenable matrix which is not strong Liouville, and then an example of a strong Liouville matrix which is not amenable.

\begin{example} \label{ex:amenable-non-SL}
Let $\LL(\bZ^d) = \bZ^d \times \big[\bigoplus_{x \in \bZ^d}\bZ_2 \big]$ where $\bigoplus_{x \in \bZ^d}\bZ_2$ are finitely supported functions on $\bZ^d$. Then, $\LL(\bZ^d)$ can be imbued with the lamplighter group multiplication given by $(x,w) \cdot (y,u) = (x+y, w+ T_x(u))$ where $T_x(u)$ is given by $T_x(u)(z) = u(z-x)$. From \cite[Example 6.1]{Kai83} we know that $\LL(\bZ^d)$ is solveable (and hence amenable) so that by Kesten's amenability criterion, any symmetric random walk on $\LL(\bZ^d)$ is amenable. On the other hand, by \cite[Proposition 6.1]{Kai83} for $d\geq 3$ we know that any symmetric random walk on $\LL(\bZ^d)$ determined by a \emph{finitely supported} measure $\mu$ on $\LL(\bZ^d)$ fails to be \emph{Liouville}. More precisely, this means there is a non-constant \emph{bounded} harmonic function for the random walk. 

Hence, any symmetric random walk on $\LL(\bZ^d)$ arising from a finitely supported measure $\mu$ is amenable but not strong Liouville. Suppose such a random walk is given by a stochastic matrix $P$. By Corollary \ref{c:amenable-strong-liouville-char} there exists another stochastic matrix $Q$ on $\LL(\bZ^d)$ which is Doob equivalent to $P$ (where Doob transform is done via a harmonic function), but $Q$ is not conjugate to $P$. Since $Q$ is a Doob transform of $P$ via a harmonic function and $P$ is amenable, we see that $Q$ is also amenable. Hence we have two non-conjugate amenable stochastic matrices whose tensor algebras are isometrically isomorphic. This shows that recurrence in \cite[Theorem 3.11]{DOM14} cannot be weakened to amenability without additional assumptions.
\end{example}

\begin{example}
Let $\Omega = \bN$ and define
\begin{equation*}
P_{00} = P_{i, i-1} = 0.1, \,\, P_{i-1, i} = 0.9 \,\, \FORAL i \geq 1
, \, \AND P_{ij} = 0 \,\, \text{otherwise}.
\end{equation*}
The spectral radius is strictly less than 1 since
\begin{align*}
    \p{00}{n}
    &\leq  \sum_{i = 0}^{\lceil n/2 \rceil} \binom{n}{i}(0.9)^i(0.1)^{n-i}  \\
    &\leq  (0.9)^{\lceil n/2 \rceil} (0.1)^{\lfloor n/2 \rfloor} 2^n,
\end{align*} 
hence $\rho(P) < 0.6 < 1$ and $P$ is not amenable. 

On the other hand $P$ is strong Liouville. Indeed, let $h$ be a harmonic function for $P$ and let $Q:= P^{(h,1)}$ be its Doob transform. From Lemma  \ref{l:doob-ratio} we see that $1 = \lambda = \frac{P_{ii}^{(n)}}{Q_{ii}^{(n)}}$ for any $n\in \bN$ and $(i,i) \in \Gr(P^n)$. Hence, $P_{ii}^{(n)} = Q_{ii}^{(n)}$ for all $n \in \bN$. Since $Q$ as a stochastic matrix is completely determined by $Q_{00} = P_{00} = 0.1$, and $Q_{ii}^{(2)} = P_{ii}^{(2)} = 0.18$ for $i \in \Omega$, we see that $P=Q$ and $h$ is constant.
\end{example}

We conclude this section with our motivating application to the classification of non-self-adjoint operator algebras arising from stochastic matrices, which follows from Corollary \ref{c:amenable-strong-liouville-char} together with \cite[Theorem 3.8]{DOM14} and \cite[Theorem 7.27]{DOM14}.

\begin{corollary} \label{c:optimal-classification}
Let $P$ and $Q$ be stochastic matrices over $\Omega_P$ and $\Omega_Q$. Suppose that $\rho(P) = \rho(Q)$ and that either $P$ or $Q$ are strong Liouville. Then there is an isometric isomorphism $\varphi : \T_+(P) \rightarrow \T_+(Q)$ if and only if $P$ and $Q$ are conjugate.
\end{corollary}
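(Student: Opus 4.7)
The plan is to chain together the previously established results with the cited isomorphism theorem from \cite{DOM14}. The forward implication is essentially immediate: a conjugacy $\sigma : \Omega_P \to \Omega_Q$ induces a unitary $U : \H_P \to \H_Q$ defined on the canonical basis by $\xi_{m,j,k} \mapsto \xi_{m,\sigma(j),\sigma(k)}$. Since conjugacy preserves all the matrix entries $P_{ij} = Q_{\sigma(i)\sigma(j)}$, and hence all the conditional probability ratios appearing in the formula for $S_A^{(n)}$, conjugation by $U$ carries each generator of $\T_+(P)$ to the corresponding generator of $\T_+(Q)$ obtained by relabeling the matrix $A$ through $\sigma$. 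This gives the desired isometric isomorphism.

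For the nontrivial direction, I would first invoke the combination of \cite[Theorem 3.8]{DOM14} and \cite[Theorem 7.27]{DOM14} already singled out in the introduction, which shows that an isometric isomorphism $\varphi : \T_+(P) \to \T_+(Q)$ forces a graph isomorphism $\sigma : \Gr(P) \to \Gr(Q)$ under which the conditional probabilities
\[
\frac{P_{ij}^{(n)} P_{jk}^{(m)}}{P_{ik}^{(n+m)}}
\]
are preserved. This is precisely the statement that $P \sim_d Q$ with witness $\sigma$. At this point Corollary \ref{c:amenable-strong-liouville-char} takes over: the spectral radii agree by hypothesis, and one of $P$, $Q$ is strong Liouville. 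If $P$ is strong Liouville, Corollary \ref{c:amenable-strong-liouville-char} gives $Q \cong P$ at once; if only $Q$ is strong Liouville, apply the same corollary after interchanging the roles of $P$ and $Q$, which is legitimate because both Doob equivalence and the hypothesis $\rho(P) = \rho(Q)$ are symmetric.

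The main obstacle — extracting conjugacy from Doob equivalence under the strong Liouville hypothesis — is precisely what was handled by Theorem \ref{t:char-cond} and then packaged into Corollary \ref{c:amenable-strong-liouville-char}. So this final corollary amounts to a brief assembly: extract Doob equivalence from the operator-algebraic hypothesis via the two theorems of \cite{DOM14}, then feed it into Corollary \ref{c:amenable-strong-liouville-char} together with the spectral radius and strong Liouville assumptions. No further probabilistic or analytic input is needed.
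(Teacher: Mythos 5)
Your proposal is correct and follows essentially the same route as the paper, which obtains the corollary by combining the equivalence ``isometric isomorphism of $\T_+(P)$ and $\T_+(Q)$ $\Leftrightarrow$ Doob equivalence'' from \cite[Theorem 3.8 \& Theorem 7.27]{DOM14} with Corollary \ref{c:amenable-strong-liouville-char}, using symmetry of Doob equivalence when only $Q$ is strong Liouville. The only tacit point, present in the paper as well, is that irreducibility of $P$ and $Q$ (needed for Theorem \ref{t:char-cond} and Corollary \ref{c:amenable-strong-liouville-char}) is assumed throughout.
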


\section{Non-commutative peaking} \label{S:NC-peaking}

In this section we focus only on \emph{finite} irreducible stochastic matrices. We will characterize the set of boundary states $\Omega_b$ in a way that allows us to detect them by inspecting the stochastic matrix.

When the matrix $P$ is finite, the definition of the algebra $\T_+(P)$ simplifies. For any $(i,j) \in \Gr(P^n)$ and $n \geq 0$ define an operator $S_{ij}^{(n)} \in B(\H_P)$ by setting
$$
S_{ij}^{(n)}(\xi_{m,j',k}) = \begin{cases}
\sqrt{\frac{P_{ij}^{(n)}P_{jk}^{(m)}}{P_{ik}^{(n+m)}}} \xi_{n+m,i,k} & \text{ if } j = j' \\
0 & \text{ otherwise}.
\end{cases}
$$
Then $S_{ij}^{(n)}$ defines a bounded operator on $\H_P$, and its adjoint is given by
$$
S_{ij}^{(n)*}(\xi_{m+n,j',k}) = \begin{cases}
\sqrt{\frac{P_{ij}^{(n)}P_{jk}^{(m)}}{P_{ik}^{(n+m)}}} \xi_{m,j,k} & \text{ if } i = j' \\
0 & \text{ otherwise}.
\end{cases}
$$
Now if $A = [a_{ij}] \in \Arv(P)_n$, since the matrix is now finite we may write $A = \sum_{i,j} a_{ij}S^{(n)}_{ij}$. Hence, we see that 
$$
\T_+(P) := \overline{\Alg} \{ \ S_{ij}^{(n)} \ | \ (i,j) \in \Gr(P^n), \ n \geq 0 \ \}.
$$

\begin{definition}
A state $k\in\Omega$ is said to be \emph{escorted} if for all $s \in\Omega$ with $P_{ks} > 0$, there exists $k\neq k' \in \Omega$ such that  $P_{k's} > 0$.
\end{definition}

It follows that if $k\in \Omega$ is an escorted state for a stochastic matrix $P$, then it is escorted for all iterates of $P$. Indeed, suppose $k\in \Omega$ is escorted. If $P_{ks}^{(n)}>0$, then there exists a path $k, i_1, \dots i_{n-1}, s$ in the sense that $P_{ki_1}, P_{i_1 i_2}, \dots, P_{i_{n-1} s} >0$. Since $k$ is escorted, $\exists \, k' \neq k$ such that $P_{k'i_1} > 0$, and thus $\p{k's}{n} > 0$. Thus, we see that $k$ is an escorted state of $P^n$ for every $n\geq 1$.

By definition, if a state $k$ is escorted, it is not the only element in its cyclic component. Thus $k$ is not exclusive. In addition, if a matrix $P$ has multiple arrival, then all non-exclusive states are escorted. The following strengthens \cite[Proposition 3.11]{DOM16}.

\begin{theorem}\label{t:esc-bdry}
Let $P$ be a finite irreducible matrix and $k\in \Omega$ a state. Then $k\in \Omega_b$ if and only if it is escorted.
\end{theorem}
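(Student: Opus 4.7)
I prove the two directions separately.

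\emph{Not escorted $\Rightarrow k\notin\Omega_b$.} Pick $s\in\Omega$ witnessing non-escortedness of $k$, so $P_{ks}>0$ but $P_{k's}=0$ for all $k'\ne k$. Then $k$ is the unique predecessor of $s$, and every length-$(n{+}1)$ path ending at $s$ must visit $k$ at step $n$, giving the identity
\[
P_{js}^{(n+1)}\;=\;P_{jk}^{(n)}\,P_{ks},\qquad j\in\Omega,\ n\ge 0.
\]
This identity makes $\xi_{n,j,k}\mapsto \xi_{n+1,j,s}$ extend by linearity to an isometry $V:\H_{P,k}\to\H_{P,s}$ with range $\H_{P,s}\ominus\bC\xi_{0,s,s}$ (well-defined because both basis vectors exist exactly when $P_{jk}^{(n)}>0$). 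Substituting the identity into the defining formula for $S_{ij}^{(m)}$ and canceling the common $P_{ks}$ shows that $V$ intertwines every generator, and hence every element, of $\T_+(P)$. Therefore $V\H_{P,k}$ is an invariant subspace of $\H_{P,s}$ on which $\T_+(P)$ acts unitarily equivalently to its action on $\H_{P,k}$, so $\|[T_{pq}|_{\H_{P,k}}]\|\le\|[T_{pq}|_{\H_{P,s}}]\|$ for every $T=[T_{pq}]\in M_\ell(\T_+(P))$, and $k\notin\Omega_b$.

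\emph{Escorted $\Rightarrow k\in\Omega_b$.} I construct an explicit scalar peaking operator. As observed just after the definition of escorted, escortedness passes to every iterate $P^m$: for each $m\ge 1$ and each $s$ with $P_{ks}^{(m)}>0$ there is $k'\ne k$ with $P_{k's}^{(m)}>0$. Let $p$ be the period of $\Gr(P)$, let $\Omega_0$ denote the cyclic class of $k$, and pick $n$ divisible by $p$ large enough that $P_{kj}^{(n)}>0$ for every $j\in\Omega_0$ (possible because $P^p$ restricted to $\Omega_0\times\Omega_0$ is irreducible and aperiodic). Escortedness forces $|\Omega_0|\ge 2$, whence $P_{kk'}^{(n)}>0$ for some $k'\in\Omega_0\setminus\{k\}$ and thus $P_{kk}^{(n)}<1$. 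Set $T:=S_{k,k}^{(n)}\in\T_+(P)$. Then $T\xi_{0,k,k}=\xi_{n,k,k}$ has unit norm, so $\|T|_{\H_{P,k}}\|=1$. For $s\ne k$, $T$ annihilates $\xi_{m,j,s}$ when $j\ne k$ and sends $\xi_{m,k,s}$ to $\sqrt{r_m}\,\xi_{m+n,k,s}$ with
\[
r_m\;=\;\frac{P_{kk}^{(n)}P_{ks}^{(m)}}{P_{ks}^{(m+n)}}\;=\;\frac{P_{kk}^{(n)}P_{ks}^{(m)}}{\sum_jP_{kj}^{(n)}P_{js}^{(m)}};
\]
propagated escortedness supplies $k'\in\Omega_0\setminus\{k\}$ with $P_{k's}^{(m)}>0$, and the choice of $n$ gives $P_{kk'}^{(n)}>0$, so the denominator strictly exceeds $P_{kk}^{(n)}P_{ks}^{(m)}$ and $r_m<1$ for every admissible $m$.

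\emph{Main obstacle.} The crux of the second direction is upgrading the pointwise strict inequalities $r_m<1$ to a uniform bound $\sup_m r_m<1$ on each $\H_{P,s}$ with $s\ne k$. The plan is to invoke the Perron--Frobenius convergence $P_{js}^{(m)}/P_{ks}^{(m)}\to 1$ as $m\to\infty$ along the residue class $m\equiv c\pmod p$ determined by $s$, which forces $r_m\to P_{kk}^{(n)}<1$, so the tail of $(r_m)$ is bounded strictly away from $1$. The finitely many initial terms each satisfy $r_m<1$, so the sup is strictly $<1$, and taking the maximum over the finitely many $s\ne k$ preserves strictness, yielding $\max_{s\ne k}\|T|_{\H_{P,s}}\|<1=\|T|_{\H_{P,k}}\|$. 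Hence $k\in\Omega_b$, realized by the scalar ($\ell=1$) operator $T$.
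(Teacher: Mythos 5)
Your proposal is correct and takes essentially the same route as the paper: the same intertwining isometry $\xi_{m,j,k}\mapsto\xi_{m+1,j,s}$ built from the identity $P_{js}^{(m+1)}=P_{jk}^{(m)}P_{ks}$ handles the non-escorted direction, and the same peaking element $S_{kk}^{(n)}$ (with $n$ a multiple of the period so that $P_{kj}^{(n)}>0$ on the cyclic class of $k$) handles the escorted direction. Your resolution of the uniform bound --- each $r_m<1$ by escortedness of the iterates, plus $r_m\to P_{kk}^{(n)}<1$ via the convergence theorem for finite irreducible chains --- is exactly the mechanism the paper uses (split there into the cases of attained versus unattained supremum).
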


\begin{proof}
We first show that if a state $k$ is not escorted, then it is not in the boundary. Let $k$ be a non-escorted state, then $\exists\, s\in\Omega$ such that $P_{ks} > 0 $ while $P_{k's} = 0 \FORAL k'\neq k \in \Omega$. By irreducibility of the graph, we must have $s \neq k$. For the sake of brevity, we denote $\H_{P, k}$ as $\H_k$. Define the isometry
\[
    W:\mathcal{H}_{k}\rightarrow \mathcal{H}_{s},\quad \xi_{m,j,k}\mapsto \xi_{m+1,j,s}.
\]
Note that this is well-defined since $\p{js}{m+1} \geq \p{jk}{m}P_{ks} > 0$.
We claim that $T \circ W = W\circ T$ for all operators $T \in\mathcal{T}_+(P)$. It suffices to check this on generators $T = S_{ij}^{(n)}$ applied to basis vectors $\xi_{m,j',k}$ in $\mathcal{H}_{k}$. We assume that $j=j'$, for otherwise both sides of the equation are zero. In this case we have
\[
    (W S_{ij}^{(n)})(\xi_{m,j,k})
    = \sqrt{\frac{P_{ij}^{(n)}P_{jk}^{(m)}}{P_{ik}^{(n+m)}}} W(\xi_{m+n,i,k}) = \sqrt{\frac{P_{ij}^{(n)}P_{jk}^{(m)}}{P_{ik}^{(n+m)}}}
    \xi_{m+n+1,i,s},
\]
and
\[
    (S_{ij}^{(n)} W)(\xi_{m,j,k}) = S_{ij}^{(n)}(\xi_{m+1,j,s})
    = \sqrt{ \frac{P_{ij}^{(n)}P_{js}^{(m+1)}} {P_{is}^{(n+m+1)}}  }\xi_{m+n+1, i, s}.
\]
Note that $\xi_{m+n+1,i,s}$ is well-defined since
\[
    \p{is}{m + n + 1} \geq \p{ij}{n} \p{jk}{m} P_{ks} > 0.
\]
Now as $k$ is not escorted, we see that
\[
    \frac{P_{js}^{(m+1)}}{P_{jk}^{(m)}}
    = \frac{\sum_{k'\in\Omega}P_{jk'}^{(m)}P_{k's}} {P_{jk}^{(m)}}
    = \frac{P_{jk}^{(m)}P_{ks}}{P_{jk}^{(m)}}
    = P_{ks} = \frac{P_{is}^{(m+n+1)}}{P_{ik}^{(n+m)}}.
\]
where the final equality is established through a similar computation. By linearity, the claim holds for all $v\in\H_k$ and generators $S_{ij}^{(n)}$. For arbitrary generators $T_1$ and $T_2$ we have $T_1 T_2 W = T_1 W T_2 = W T_1 T_2$. Thus, the claim is proved for all polynomials in the generators $S_{ij}^{(n)}$, which are dense in $\T_+(P)$. By continuity the claim is proved.

Thus, for any $T = [T_{pq}] \in M_{\ell}(\T_+(P))$ we have
\[
  \|[T_{pq}|_{\H_k}]\| = \| W^{(\ell)} [T_{pq}|_{\H_k}] \| = \| [T_{pq}|_{\H_s}] W^{(\ell)} \| \leq  \|[T_{pq}|_{\H_s}]\|,
\]
where $W^{(\ell)}$ is the $\ell$-fold direct sum of $W$. Hence, we see that $k\notin \Omega_b$ as asserted.

Conversely, suppose $k$ is escorted. Let the cyclic component of $k$ be $\Omega_0$. Since $k$ is escorted, it does not comprise its own cyclic component and $|\Omega_0|\geq 2$. By \cite[Theorem 3.10]{DOM14} there exists $n_0 \in \bN$ such that $P_{kk'}^{(n_0)} > 0 \FORAL k' \in \Omega_0$. For such $n_0$ we claim that $k$ is completely peaking with operator $S_{kk}^{(n_0)}$.

Note that since $\| S_{kk}^{(n_0)}(\xi_{0, k, k})\| = 1$, we must have that $\| S_{kk}^{(n_0)} \| = 1$. Fix $s \neq k$ in $\Omega$ and let $R_{ks} = \{ \ m \ | \ \p{ks}{m} > 0 \ \}$. Using the $C^*$-identity and the formula for the adjoint of $\s{kk}{n_0}$, it suffices to show that,
\[
    ||\s{kk}{n_0}|_{\H_s}||^2 = \sup_{m\in R_{ks}} \frac{\p{kk}{n_0}\p{ks}{m}}{\p{ks}{m+n_0}} < 1.
\]

First assume the supremum is attained at some $m_0 \in \bN$. Since $k$ is escorted, $\exists \, k'\neq k\in\Omega$ such that $\p{k's}{m_0} > 0$. Since $k'\in\Omega_0$, we have that $\p{kk'}{n_0} > 0$, and we get
\[
    \frac{\p{kk}{n_0}\p{ks}{m_0}}{\p{ks}{m_0 + n_0}} \leq 
    \frac{\p{kk}{n_0}\p{ks}{m_0}}{\p{kk'}{n_0}\p{k's}{m_0} + \p{kk}{n_0}\p{ks}{m_0}}
    < 1.
\]

If the supremum is not attained by any finite $m$, then by convergence theorem for finite irreducible matrices (see for instance \cite[Theorem 1.10]{DOM16} for a statement) we get that,  
\[
    ||\s{kk}{n_0}|_{\H_s}||^2 = \limsup_{m\rightarrow\infty} \frac{\p{kk}{n_0}\p{ks}{m}}{\p{ks}{m+n_0}} =
    \p{kk}{n_0} < 1.
\]
Therefore, we conclude that $k\in\Omega_b$ as asserted.
\end{proof}

The following example shows that for matrices without multiple arrival, it is in general not true that a state is either exclusive or in the boundary.

\begin{example} \label{e:nmp}
Let $\Omega=\{1,2\}$. Consider a $2 \times 2$ irreducible stochastic matrix $P$ such that
$$
P = \bbordermatrix{
    & 1 & 2 \cr
    1 & 0 & 1 \cr
    2 & 1/2 & 1/2
}
$$
Both states are non-exclusive, and $P$ does not have multiple arrival. State $1$ is escorted, while state $2$ is not. By Theorem \ref{t:esc-bdry} we see that $\Omega_b = \{1\} \neq \Omega\setminus \Omega_e$. 
\end{example}

We say that a finite irreducible stochastic matrix $P$ is a \emph{cycle}, if $\Gr(P)$ is a simple cycle as a directed graph. We conclude this section by showing that when $P$ is not a cycle, escorted states are exactly those that are necessary in order to retain the norm of any operator in $M_{\ell}(\T_+(P))$. For the proof we will assume some familiarity with the preliminaries on boundary representations of operator algebras presented in \cite[Section 1]{DOM16}, as well as \cite[Section 3]{DOM16} for the boundary representations of $\T_+(P)$.

\begin{corollary} \label{c:max-mod}
Let $P$ be a finite irreducible stochastic matrix which is \emph{not a cycle}, and let $T=[T_{rs}] \in M_{\ell}(\T_+(P))$ for $1\leq \ell \in \bN$. Then $\|T \|$ is the maximum over $\|[T_{pq}|_{\H_{P,k}}]\|$ for escorted states $k\in \Omega$.
\end{corollary}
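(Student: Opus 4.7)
The plan is to deduce this statement by combining Theorem \ref{t:esc-bdry} with the norm-retention property of $\Omega_b$ recorded in Section \ref{s:prelim}, leaving essentially no new computation. First I would check that the hypothesis ``$P$ is not a cycle'' rules out the degenerate case: as noted in the preliminaries, ``not a cycle'' is equivalent to the existence of at least one non-exclusive state. Under this equivalent assumption, the discussion immediately following the definition of $\Omega_b$ in Section \ref{s:prelim} (tracing back to \cite[Section 3]{DOM16}) guarantees a unique smallest non-empty subset $\Omega_b \subseteq \Omega$ such that
\[
\|T\| \;=\; \max_{k\in\Omega_b}\,\|[T_{pq}|_{\H_{P,k}}]\|
\]
for every $T = [T_{pq}] \in M_\ell(\T_+(P))$ and every $\ell \geq 1$.

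Next I would invoke Theorem \ref{t:esc-bdry}, which identifies this $\Omega_b$ precisely with the set of escorted states of $P$. Substituting that identification into the displayed formula immediately yields the corollary: $\|T\|$ is recovered as the maximum of $\|[T_{pq}|_{\H_{P,k}}]\|$ as $k$ ranges over escorted states. In particular, the ``$\geq$'' direction of the claim is automatic from the fact that each $\H_{P,k}$ is a reducing subspace for $\T_+(P)$, while the nontrivial ``$\leq$'' direction is exactly the content of the norm-retention property combined with Theorem \ref{t:esc-bdry}.

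The only obstacle I anticipate is definitional bookkeeping rather than fresh mathematical substance. One must confirm that the set $\Omega_b$ introduced via completely peaking representations in the definition preceding Theorem \ref{t:esc-bdry} agrees with the (a priori distinct) set $\Omega_b$ characterized by the norm-retention formula quoted in the preliminaries. Since both were established in \cite{DOM16} via the boundary representations of $\T_+(P)$ and are denoted by the same symbol throughout the paper, this compatibility is effectively already asserted in Section \ref{s:prelim}; once it is explicitly invoked the corollary is complete.
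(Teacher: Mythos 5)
Your argument is correct in substance, but it follows a genuinely different route from the paper's. You deduce the corollary from the statement in Section \ref{s:prelim} (quoted from \cite[Section 3]{DOM16}) that, when $P$ is not a cycle, there is a unique smallest non-empty norming set $\Omega_b$, and then substitute Theorem \ref{t:esc-bdry}. The paper instead argues directly: since the $\H_{P,k}$ are reducing, $\|T\|=\max_{k\in\Omega}\|[T_{pq}|_{\H_{P,k}}]\|$; for each non-escorted $k$ the intertwining isometry constructed in the proof of Theorem \ref{t:esc-bdry} gives $\|[T_{pq}|_{\H_{P,k}}]\|\le\|[T_{pq}|_{\H_{P,s}}]\|$ for \emph{all} $T$ simultaneously, so non-escorted states can be discarded one by one; and non-emptiness of the remaining set is secured via \cite[Corollary 3.17]{DOM16} together with \cite[Theorem 7.2]{Arv11}. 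Your route is shorter but leans more heavily on the imported minimal-norming-set result, and the ``definitional bookkeeping'' you flag is a genuine (if short) lemma rather than a matter of shared notation: the definition of $\Omega_b$ used in Theorem \ref{t:esc-bdry} is the set of completely peaking states, so you must check it coincides with the smallest norming set. This does follow once the smallest norming set is known to exist: a completely peaking state lies in every norming set (omitting it would contradict the strict peaking inequality), while if $k$ in the smallest norming set were not completely peaking then $\Omega\setminus\{k\}$ would still be norming and minimality would force $k$ out. Spelling out these two lines would close the only gap in your write-up; with them, your proof is a valid, more citation-driven alternative, whereas the paper's proof is more self-contained and exhibits the explicit domination mechanism between states.
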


\begin{proof}
Let $k \in \Omega$. From the proof of $(4) \implies (1)$ in \cite[Corollary 3.17]{DOM16} together with \cite[Theorem 7.2]{Arv11} we see that there exists $k\in \Omega$ which is completely peaking (this coincides with $\pi_k$ being strongly peaking as defined in \cite[Definition 7.1]{Arv11}). Hence, we see that $\Omega_b \neq \emptyset$.

In general, as each $\H_{P,k}$ is reducing, we have that
$\| T \| = \max_{k \in \Omega}\|[T_{pq}|_{\H_{P,k}}]\|$. However, if $k \in \Omega$ is not escorted, by Theorem \ref{t:esc-bdry} there is some $s \in \Omega$ such that $\|[T_{pq}|_{\H_{P,k}}]\| \leq \|[T_{pq}|_{\H_{P,s}}]\|$ for all $T\in \M_{\ell}(\T_+(P))$ and $\ell \geq 1$. Thus, we may inductively remove all non escorted states while retaining the norm of $T$. Eventually we will get that $\|T \|$ is the maximum over $\|[T_{pq}|_{\H_{P,k}}]\|$ for escorted states $k\in \Omega$.
\end{proof}

Theorem \ref{t:esc-bdry} now allows us to concretely describe the $C^*$-envelope $C^*_e(\T_+(P))$ of $\T_+(P)$ as a short exact sequence, as is given at the beginning of \cite[Section 5]{DOM16}, in terms of escorted states. Another useful consequence of Theorem \ref{t:esc-bdry} is a computable form of \cite[Theorem 5.6]{DOM16}, which shows that the column nullity (see \cite[Definition 5.3]{DOM16}) need only be computed for escorted states when trying to determine the $C^*$-envelope $C^*_e(\T_+(P))$ up to *-isomorphism for varying $P$.

\subsection*{Acknowledgments} The authors are grateful to Florin Boca for providing remarks on this manuscript. We are indebted to Florin for his support, help and advice throughout the course of the IGL project. The authors are also grateful to Ariel Yadin for several useful remarks and references.



\begin{thebibliography}{10}

\bibitem{Arv69}
William~B. Arveson, \emph{Subalgebras of {$C^{\ast} $}-algebras}, Acta Math. \textbf{123} (1969), 141--224.

\bibitem{Arv72}
William~B. Arveson, \emph{Subalgebras of {$C^{\ast} $}-algebras. {II}}, Acta Math.\textbf{128} (1972), no.~3-4, 271--308.

\bibitem{Arv76}
William~B. Arveson, \emph{An invitation to {$C\sp*$}-algebras}, Springer-Verlag, New York-Heidelberg, 1976, Graduate Texts in Mathematics, No. 39.

\bibitem{Arv03}
William~B. Arveson, \emph{Noncommutative dynamics and {$E$}-semigroups}, Springer Monographs in Mathematics, Springer-Verlag, New York (2003), x+434.

\bibitem{Arv08}
William~B. Arveson, \emph{The noncommutative {C}hoquet boundary}, J. Amer. Math. Soc.
  \textbf{21} (2008), no.~4, 1065--1084.

\bibitem{Arv11}
William~B. Arveson, \emph{The noncommutative {C}hoquet boundary {II}: hyperrigidity},
  Israel J. Math. \textbf{184} (2011), 349--385.

\bibitem{BM10}
B.~V.~Rajarama Bhat and Mithun Mukherjee, \emph{Inclusion systems and
  amalgamated products of product systems}, Infin. Dimens. Anal. Quantum
  Probab. Relat. Top. \textbf{13} (2010), no.~1, 1--26.

\bibitem{BLM04}David P. Blecher and Christian Le Merdy, \emph{Operator algebras and their modules---an operator space approach}, London Mathematical Society Monographs. New Series, Volume 30

\bibitem{CD60}
Gustave Choquet and Jacques Deny, \emph{Sur l'\'{e}quation de convolution {$\mu =\mu \ast \sigma $}}, C. R. Acad. Sci. Paris \textbf{250} (1960), 799--801.

\bibitem{Chu60}
Kai~Lai Chung, \emph{Markov chains with stationary transition probabilities}, Die Grundlehren der mathematischen Wissenschaften, Bd. 104, Springer-Verlag, Berlin, 1960.

\bibitem{Dav96}
Kenneth~R. Davidson, \emph{{$C^*$}-algebras by example}, Fields Institute  Monographs, vol.~6, American Mathematical Society, Providence, RI, 1996.

\bibitem{Der54}
Cyrus Derman, \emph{A solution to a set of fundamental equations in {M}arkov chains}, Proc. Amer. Math. Soc. \textbf{5} (1954), 332--334.

\bibitem{Dix77}
Jacques Dixmier, \emph{{$C\sp*$}-algebras}, North-Holland Publishing Co., Amsterdam-New York-Oxford, 1977, Translated from the French by Francis Jellett, North-Holland Mathematical Library, Vol. 15.

\bibitem{DOM14}
Adam Dor-On and Daniel Markiewicz, \emph{Operator algebras and subproduct systems arising from stochastic matrices}, J. Funct. Anal. 267 (2014), 1057--1120.

\bibitem{DOM16}
Adam Dor-On and Daniel Markiewicz, \emph{{$C\sp*$}-envelopes of tensor algebras arising from stochastic matrices}, Integral Equations Operator Theory \textbf{88} (2017), 185--227.

\bibitem{Dur10}
Rick Durrett, \emph{Probability: theory and examples}, fourth ed., Cambridge Series in Statistical and Probabilistic Mathematics, Cambridge University Press, Cambridge, 2010.

\bibitem{Eri89}
K. Bruce Erickson, \emph{The strong Liouville Property for a Class of Random Walks}, Monatsh. Math. 109 (1989), 237-246.

\bibitem{Fel68}
William Feller, \emph{An introduction to probability theory and its
  applications. {V}ol. {I}}, Third edition, John Wiley \& Sons Inc., New York, 1968.

\bibitem{Kai83}
Vadim A. Kaimanovich, and Anatoly M. Vershik. \emph{Random walks on discrete groups: boundary and entropy}, The Annals of Probability (1983): 457-490.

\bibitem{KKS76}
John G. Kemeny, Anthony W. Knapp, and Laurie J. Snell, \emph{Denumerable {M}arkov chains}, 2nd edition, Springer-Verlag, New York-Heidelberg-Berlin (1976), xii+484.

\bibitem{Kes59}
Harry Kesten. \emph{Full {B}anach mean values on countable groups}, Math. Scand. \textbf{7} (1959), 146--156.

\bibitem{RW98}
Iain Raeburn and Dana~P. Williams, \emph{Morita equivalence and continuous-trace {$C^*$}-algebras}, Mathematical Surveys and Monographs,
  vol.~60, American Mathematical Society, Providence, RI, 1998.

\bibitem{SS09}
Orr~Moshe Shalit and Baruch Solel, \emph{Subproduct systems}, Doc. Math.
  \textbf{14} (2009), 801--868.

\bibitem{Woe00}
Wolfgang Woess, \emph{Random walks on infinite graphs and groups} Vol. 138. Cambridge University Press, 2000.

\bibitem{Yau75}
Shing Tung Yau, \emph{Harmonic functions on complete {R}iemannian manifolds}, Comm. Pure Appl. Math. \textbf{28} (1975), 201--228.

\end{thebibliography}
\end{document}